\title{A New Sparsification and Reconstruction Strategy for Compressed Sensing Photoacoustic Tomography}
\author{Markus Haltmeier\thanks{Correspondence: {\tt markus.haltmeier@uibk.ac.at}}\,{}} 
\author{Michael Sandbichler}
\affil{Department of Mathematics, University of Innsbruck\\
Technikestra{\ss}e 13, 6020 Innsbruck, Austria}
\author{Thomas Berer}
\author{Johannes Bauer-Marschallinger}
\author{Peter Burgholzer\thanks{Also affiliated with Christian Doppler Laboratory for Photoacoustic Imaging and Laser Ultrasonics, Linz}}
\affil{Research Center for Non-Destructive Testing (RECENDT)\\
Altenberger Stra{\ss}e 69, 4040 Linz, Austria}
\author{Linh V. Nguyen}
\affil{Department of Mathematics, University of Idaho\\
875 Perimeter Dr, Moscow, ID 83844, US}
\affil{}
\date{}
\colorlet{lred}{red!40}
\colorlet{lgreen}{green!40}
\colorlet{lblue}{blue!40}
\newcommand{\rr}{\mathbf r}
\newcommand{\data}{y}
\newcommand{\source}{f}
\newcommand{\Lsource}{h}
\newcommand{\R}{\mathbb R}
\newcommand{\kl}[1]{(#1)}
\newcommand\norm[1]{\Vert#1\Vert}
\newcommand{\set}[1]{\{#1\}}
\newcommand{\edot}{\cdot}
\newcommand{\prox}{\mathrm{prox}}
\newcommand{\sign}{\mathrm{sign}}
\newcommand{\fullop}{\mathbf M}
\newcommand{\wave}{\mathbf W}
\newcommand{\Am}{\mathbf A}
\newtheorem{theorem}{Theorem}[section]
\newtheorem{remark}{Remark}[section]
\begin{document}
\maketitle

\begin{abstract}
	Compressed sensing (CS) is a promising approach to reduce the number of measurements in photoacoustic  tomography (PAT) while preserving  high spatial resolution. This allows to
	increase  the measurement  speed and to reduce system costs. Instead of collecting
	point-wise measurements, in CS one uses various combinations of
	pressure values  at different sensor locations. Sparsity is the main condition allowing
	to recover the photoacoustic  (PA) source from compressive measurements. In this paper
	we introduce a new concept  enabling sparse recovery in CS PAT.
	Our approach is based on the fact that the second time derivative applied to the
	measured pressure data    corresponds to the application of the Laplacian to
	the original PA source. As typical PA sources consist of smooth parts and singularities along
	interfaces the Laplacian of the source is sparse (or at least compressible). To efficiently exploit the induced sparsity we develop a  reconstruction framework to jointly recover the initial and the modified sparse source. Reconstruction results with simulated as well as experimental data are given.
\end{abstract}

\section{Introduction}\label{sec:intro}

Photoacoustic tomography (PAT) is a non-invasive hybrid imaging technology, that beneficially
combines  the high contrast of pure optical imaging and the high spatial resolution of pure ultrasound imaging (see \cite{Bea11,Wan09b,XuWan06}).
The basic principle of PAT is  as follows (see Fig.~\ref{fig:pat}). A semitransparent sample (such as a part of a human patient) is illuminated with short pulses of optical radiation. A fraction  of the optical energy is absorbed inside the sample which causes thermal heating, expansion, and a subsequent acoustic pressure wave  depending on the interior absorbing structure of the sample.
The acoustic pressure is measured outside of the  sample and used  to reconstruct an image of the interior.

\begin{psfrags}
	\begin{figure}[htb!]
		\begin{center}
			\includegraphics[width=1\columnwidth]{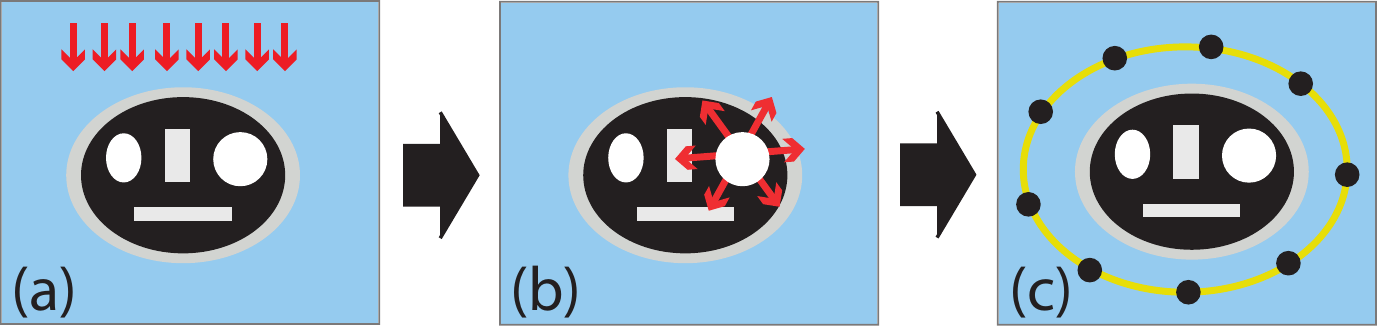}
			\caption{\label{fig:pat} (a) An object is illuminated with a  short optical pulse;  (b) the absorbed light distribution causes an acoustic pressure; (c) the  acoustic pressure is measured outside the object and used to reconstruct an image of the interior.}
		\end{center}
	\end{figure}
\end{psfrags}

In this paper we consider  PAT in heterogeneous acoustic media, where the
acoustic  pressure  satisfies the wave equation
\begin{equation} \label{eq:wave}
\frac{\partial^2 p(\rr, t)}{\partial t^2}  -  c^2(\rr)\Delta_\rr p(\rr, t)  =
\delta'(t) \, \source (\rr) \,.
\end{equation}
Here  $\rr  \in \R^d$ is the spatial location, $t \in \R$ the time variable, $\Delta_\rr$ the spatial Laplacian, $c(\rr)$ the speed of sound, and $\source (\rr)$ the  photoacoustic (PA) source that has to be recovered.
The wave equation \eqref{eq:wave} is augmented with  initial condition
$p(\rr, t) =0$  on $\set{t < 0}$. The acoustic pressure is then uniquely defined and referred to as the causal solution of~\eqref{eq:wave}.
Both cases $d  =2,3 $  for the   spatial dimension are relevant in PAT:
The case $d=3$ arises in PAT using classical point-wise measurements; the case $d=2$
is relevant   for PAT with integrating  line detectors~\cite{Bauer-Marschallinger:17,BurBauGruHalPal07,PalNusHalBur07a,paltauf2017piezoelectric}.

To recover the PA  source, the pressure is measured  with sensors
distributed on a surface or curve outside of the sample; see Fig.~\ref{fig:pat}.
Using standard  sensing, the spatial sampling step size limits the spatial resolution
of the measured  data  and therefore the spatial resolution of the final reconstruction.
Consequently, high spatial resolution requires a large number of detector  locations.
Ideally, for high frame rate, the pressure data are measured in parallel with a large
array made of small  detector elements.  However, producing a detector array with  a large number
of parallel  readouts is costly and technically demanding. In this work we use techniques
of compressed sensing (CS) to  reduce the number of required measurements and thereby
accelerating  PAT while keeping high spatial resolution \cite{sandbichler2015novel,arridge2016accelerated,betcke2016acoustic,haltmeier2016compressed}.

CS is a new sensing paradigm introduced in \cite{CanRomTao06a,CanTao06,Don06} that allows to capture high resolution signals using   much less measurements than advised by Shannon's sampling theory. The basic idea is to replace point samples by linear measurements, where each measurement consists of a  linear combination of sensor values. It  offers the ability  to reduce  the number of measurements while keeping  high spatial resolution.
One crucial ingredient enabling CS PAT is sparsity,
which refers to the requirement that the  unknown signal is sparse,
in the sense that it has only a small number of entries  that are significantly
different from zero (possibly after a change of basis).

\subsection{Main contributions}
\label{ssec:outline}

In this work we develop a new  framework for CS PAT that allows to bring sparsity into play. Our approach is rooted in the concept of sparsifying  temporal transforms developed for PAT in \cite{sandbichler2015novel,haltmeier2016compressed}
for two and three  spatial dimensions. However, the approach
in this present paper extends and simplifies this transform approach considerably.
First, it equally applies to any  detection surface and  arbitrary spatial dimension.
Second,
the new method can even be applied  to heterogenous media.
In order to achieve this, we use the second time derivative applied to the pressure data as a sparsifying transform. Opposed to \cite{sandbichler2015novel,haltmeier2016compressed},  where the transform was used to sparsify the measured signals, in the present work we exploit this for
obtaining sparsity in the original imaging domain.

Our new approach  is based on the  following. Consider the  second time derivative
$\partial_t^2 p(\rr, t)$ of the  PA pressure. We will show, that this transformed pressure
again satisfies the wave equation,  however
with the modified  PA source
$c^2 \Delta_{\rr} \source$ in place of  the original
PA source $\source$.
If the original
PA source  consists of smooth parts  and jumps,
the modified source consists of smooth parts and sparse  structures;
see Fig.~\ref{fig:pex} for an example.
This enables the use of efficient CS reconstruction algorithms
based on sparse recovery.
One  possible  approach is based on the following two-step procedure. First, recover  an approximation
$\Lsource (\rr) \simeq \Delta_{\rr} \source (\rr) $ via $\ell^1$-minimization. In a second step, recover
an approximation  to $\source$ by solving the Poisson equation $\Delta_{\rr} \source  =    \Lsource/c^2 $.

\begin{psfrags}
	\begin{figure}[htb!]
		\begin{center}
			\includegraphics[width=\columnwidth]{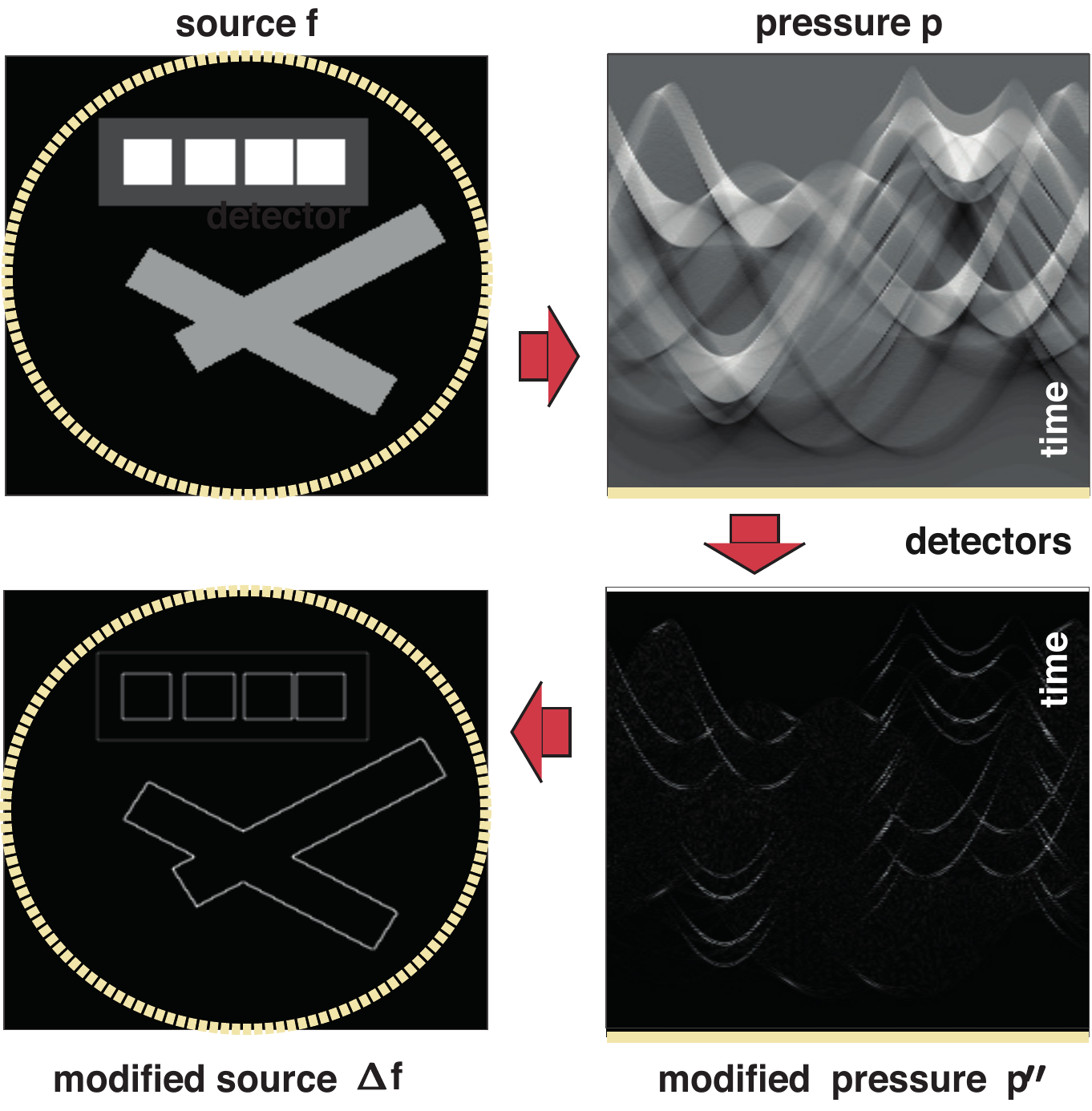}
			\caption{\label{fig:pex} Top left: PA source $\source$ (circle indicates the detector locations). Top right:  Modified source $ \Delta_{\rr} \source$. Bottom:   pressure   $p$ and second derivative $\partial_t^2 p$ (detector locations in horizontal direction and time in vertical direction).}
		\end{center}
	\end{figure}
\end{psfrags}

While the above two-stage  approach turned out to very well recover the
singularities of  the PA source,
at the same time it shows disturbing low-frequency artifacts. Therefore, in this paper
we develop a completely novel  strategy for jointly recovering $\source$ as well as its
Laplacian. It is based on solving the constrained optimization problem
\begin{equation*}
	\begin{aligned}
		&\min_{(\source, \Lsource)}    \norm{\Lsource}_1  +  I_{C} (\source)  \\
		&\text{such that }
		\begin{bmatrix} \fullop \source, \fullop\Lsource , c^2 \Delta_{\rr} \source  -    \Lsource \end{bmatrix} =
		\begin{bmatrix} \data , \data'' ,0 \end{bmatrix}   \,.
	\end{aligned}
\end{equation*}
Here $\fullop$ is the forward operator (including wave propagation and compressed
measurements),
$\norm{\edot }_1$ denotes the $\ell^1$-norm guaranteeing
sparsity of $\Lsource$ and $I_C $ the indicator
function of the positive cone $ C  \triangleq \set{\source \mid  \source(\rr) \geq 0 }$
guaranteeing  non-negativity.
In the  case of noisy data we consider a penalized  version that can be
efficiently solved via various modern optimization techniques, such as
forward backward splitting.

\subsection{Outline}
\label{ssec:outline}
In Section \ref{sec:CSPAT} we describe PAT and  existing
CS approaches. We thereby focus on the role of sparsity
in PAT. The proposed framework for CS PAT will be presented in
Section~\ref{sec:proposed}. This includes the sparsification of the PA source and
the joint reconstruction algorithm. Numerical and experimental results are presented
in Section~\ref{sec:results}. The paper concludes with a summary and outlook on future research presented in  Section \ref{sec:conclusion}.

\section{Compressed photoacoustic tomography}

\label{sec:CSPAT}

\subsection{Photoacoustic tomography}

Suppose that $V \subseteq \R^d$ is a bounded region
and let $\rr_i$ for $i=1, \dots, n$ denote admissible
detector locations distributed on the  boundary $\partial V$.
Then, for a  given source $f$ supported inside $V$,
we define
%\begin{multline} \label{eq:data}
%    (\wave \source) (\rr_i,  t) \triangleq p \kl{\rr_i, t}
%    \\ \text{ for } i = 1, \dots, n \, \text{ and }t \in [0,T]  \,,
%\end{multline}
\begin{equation} \label{eq:data}
(\wave \source) (\rr_i,  t) \triangleq p \kl{\rr_i, t}
\qquad \text{ for } i = 1, \dots, n \, \text{ and }t \in [0,T]  \,,
\end{equation}
where  $p(\rr,t)$ is the solution of \eqref{eq:wave}. The inverse source problem of PAT is to recover the PA source $\source$ from the $\wave \source$ in \eqref{eq:data}. The measured data $\wave \source$ is considered to be fully/completely sampled if the transducers are densely located on the whole boundary $\partial V$, such that the function $f$ can be stably reconstructed from the data. Finding necessary and sufficient sampling conditions for PAT is still on-going research \cite{haltmeier2016sampling}.

Let us mention that most of the theoretical works on PAT consider the continuous setting where the transducer locations are all points of a surface or curve $\Gamma \subseteq \partial V$;
see~\cite{FPR,XW05,Kun07,FHR,Kun07,IPI,Halt2d,Halt-Inv,natterer2012photo,Pal-Uniform,haltmeier2017analysis}. On the other hand, most works on discrete settings consider both discrete spatial and time variables \cite{huang2013full,haltmeier2016sampling}.  The above setting \eqref{eq:data} has been considered in a few works \cite{sandbichler2015novel,haltmeier2016compressed,chung2017motion}. It well  reflects the high sampling rate of the time variable in many practical PAT systems.

%In general, the discreteness the transducer locations in the setting (\ref{eq:data}) makes the inverse source problem of PAT ill-posed. One can stabilize the problem by assuming prior knowledge of $\source$. One possibility is to rely on the Shannon sampling theory. However, since $\source$ is compactly support, it is not band-limited. One remedy for this fact is to consider $\source$ being essentially band-limited as in \cite{haltmeier2016sampling}.
%Namely, consider the case $V$ being the disc of radius $R$ and $\source$ is essentially $b_0$-bandlimited. Then, $N_\varphi \geq 2 R b_0$ equally spaced transducers are sufficient to recover $\source$ with small error. The assumption on the bandwidth of $\source$ however is not realistic for biomedical applications, since the image may contain narrow features such as blood vessels and have sharp interfaces.

\subsection{Compressive  measurements in PAT}

The number $n$ of detector positions in  \eqref{eq:data} is directly related to the resolution of the final reconstruction. Namely, consider the case $V$ being the
disc of radius $R$ and $\source$ being essentially  wavenumber limited with maximal wavenumber
give by $\lambda_0$. Then, $N_\varphi \geq 2 R \lambda_0$ equally spaced transducers are sufficient to recover $\source$ with small error; see \cite{haltmeier2016sampling}. In  many biomedical applications, this results in a high  sampling rate. For example, the PA source may contain narrow features such as blood vessels and have sharp interfaces. This  results in large wavenumbers and necessary high sampling rate. For this reason, full sampling  in PAT is costly and time consuming.

To reduce the number of measurements  while  preserving resolution,  we use CS measurements in PAT. Instead of collecting $n$
individually sampled signals as in \eqref{eq:data}, we take CS
measurements
%\begin{multline} \label{eq:cs}
%	\data(j, t)
%	\triangleq
%	(\fullop \source)(j, t)
%	\triangleq
%	(\Am \wave)(j, t)
%	\\
%	=	\sum_{i=1}^n 	
%	\Am[j, i] p(\rr_i, t)
%	\quad \text{ for } j \in \set{  1, \dots, m} \,,
%\end{multline}
\begin{equation} \label{eq:cs}
\data(j, t)
\triangleq
(\fullop \source)(j, t)
\triangleq
(\Am \wave)(j, t)
=	\sum_{i=1}^n 	
\Am[j, i] p(\rr_i, t)
\quad \text{ for } j \in \set{  1, \dots, m} \,,
\end{equation}
with  $m \ll n$.
In \cite{burgholzer2016sparsifying,sandbichler2015novel}  we proposed to take the
measurement matrix $\Am$  in \eqref{eq:cs}  as the adjacency matrix of a lossless expander graph. Hadamard matrices have been proposed in
\cite{betcke2016acoustic,huynh2016single}. In this work, we  take $\Am$ as a random Bernoulli matrix with entries $\pm 1/\sqrt{m}$ with equal probability or a Gaussian random matrix consisting of i.i.d. $\mathcal{N}(0,{1}/{{m}})$-Gaussian random variables in each entry. These choices are validated by the fact that Gaussian and Bernoulli random matrices satisfy the restricted isometry property (RIP) with high probability (see Section~\ref{S:sparsity} below).

\subsection{The role of sparsity} \label{S:sparsity}

A central aspect in the theory of CS  is sparsity of the given data in some basis or frame~\cite{CanRomTao06a,Don06,foucart2013mathematical}. Recall that a vector $x\in\R^N$ is called
$s$-sparse if $\lvert \{i \mid x_i \neq0 \} \rvert \leq s$ for some number $s\ll N$, where $\lvert \,\dot \, \rvert$ is used to denote the number of elements of some
set.
If the data is known to have sparse structure, then reconstruction procedures using $\ell_1$-minimization or greedy-type methods can often be guaranteed to yield high quality results even if the problem is severely ill-posed~\cite{CanRomTao06a,GraHalSch11}.
If we are given measurements $\fullop x = y$, where $x\in\R^N$ and $y\in\R^m$ with $m\ll N$, the success of the aforementioned reconstruction procedures can for example be guaranteed if the matrix $A$ satisfies the restricted isometry property (RIP), i.e. for all $s$-sparse vectors $z$ we have
\begin{equation} \label{eq:RIP}
(1-\delta)\|z\|^2\leq \| \fullop z\|^2\leq(1+\delta)\|z\|^2 \,,
\end{equation}
for an RIP constant $\delta < 1/\sqrt{2}$; see~\cite{foucart2013mathematical}.
Gaussian and Bernoulli random matrices satisfy the RIP with high probability, provided $m \geq C s \log(\mathrm{e} n/s)$ for some reasonable constant $C$ and with $\mathrm{e}$ denoting Euler's number~\cite{baraniuk2008simple}.

In PAT, the possibility to sparsify the data has recently been examined~\cite{sandbichler2015novel,haltmeier2016compressed}.
In these works it was observed that the measured pressure data could be sparsified and the sparse reconstruction methods were applied directly to the pressure data. As a second step, still a classical reconstruction via filtered backprojection had to be performed.
The sparsification of the data was achieved with a transformation in the time direction of the pressure data. In two dimensions, the transform is a first order pseudo-differential operator \cite{sandbichler2015novel}, while in three dimensions the transform is of second order \cite{haltmeier2016compressed}.

\section{Proposed Framework}
\label{sec:proposed}

%In this section we describe our new sparse
%recovery framework  or compressed sensing
%PAT.

\subsection{Sparsifying transform}

The following theorem is the foundation
of our CS  approach.
It shows that the required sparsity
in space can be  obtained by applying the
second time derivative to the measured data.

\begin{theorem} \label{thm:laplace}
	For given source term $\source \colon \R^d \to \R$,  let  $p(\rr,t)$ denote the causal
	solution of the wave equation~\eqref{eq:wave}.  Then  $\partial_t^2 p$  is the
	causal solution of
	\begin{equation} \label{eq:wave2}
	\frac{\partial^2 p''}{\partial t^2}  -  c^2 (\rr)  \Delta_\rr p'' (\rr, t)  =
	\delta'(t) \,  c^2(\rr) \Delta_{\rr} \source (\rr) \,.
	\end{equation}
	In particular, $\partial_t^2 \fullop  [\source] = \fullop  [c^2 \Delta_{\rr} \source]$,
	where $\fullop$ denotes the CS PAT forward  operator defined
	by \eqref{eq:cs}.
\end{theorem}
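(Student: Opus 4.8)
The plan is to reduce the distributional wave equation \eqref{eq:wave} to an initial value problem, differentiate that problem twice in time, and then translate the result back.

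First I would record the standard reformulation of \eqref{eq:wave}: the causal solution $p$ of $\partial_t^2 p - c^2(\rr)\Delta_\rr p = \delta'(t)\,\source$ is $p(\rr,t) = H(t)\,u(\rr,t)$, where $H$ is the Heaviside function and $u$ solves the initial value problem
\begin{equation*}
\partial_t^2 u - c^2(\rr)\Delta_\rr u = 0 \ \text{ on } \set{t>0}, \qquad u(\rr,0) = \source(\rr), \qquad \partial_t u(\rr,0) = 0 \,.
\end{equation*}
This is checked by inserting $p = H u$ into \eqref{eq:wave} and matching the pieces supported on $\set{t=0}$: the $\delta'(t)$-coefficient forces $u(\rr,0)=\source(\rr)$, the $\delta(t)$-coefficient forces $\partial_t u(\rr,0)=0$, and what remains is the homogeneous wave equation on $\set{t>0}$. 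For a heterogeneous (but sufficiently regular) speed of sound this is the usual weak formulation employed throughout the PAT literature.

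Next I would differentiate. Set $v \triangleq \partial_t^2 u$. Since $c$ does not depend on $t$, differentiating $\partial_t^2 u = c^2(\rr)\Delta_\rr u$ twice in $t$ gives $\partial_t^2 v = c^2(\rr)\Delta_\rr v$ on $\set{t>0}$. The initial data of $v$ are read off from the equation for $u$: evaluating $\partial_t^2 u = c^2(\rr)\Delta_\rr u$ at $t=0$ yields $v(\rr,0) = c^2(\rr)\Delta_\rr\source(\rr)$, and differentiating once more before evaluating at $t=0$ yields $\partial_t v(\rr,0) = c^2(\rr)\Delta_\rr\bigl(\partial_t u(\rr,0)\bigr) = 0$. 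Hence $v$ solves the same type of initial value problem as $u$, but with $\source$ replaced by $c^2\Delta_\rr\source$. Applying the reformulation of the first step in the reverse direction, $H(t)\,v = H(t)\,\partial_t^2 u$ is the causal solution of $\partial_t^2 p'' - c^2(\rr)\Delta_\rr p'' = \delta'(t)\,c^2(\rr)\Delta_\rr\source$, i.e.\ of \eqref{eq:wave2}, and on $\set{t>0}$ it coincides with $\partial_t^2 p$. Finally, the CS PAT forward operator $\fullop = \Am\wave$ from \eqref{eq:cs} merely evaluates $p$ at the fixed detector points $\rr_i$ and applies the time-independent matrix $\Am$, so $\partial_t^2$ (acting on the time variable) commutes with $\fullop$; combining this with the previous identity gives $\partial_t^2\fullop[\source] = \fullop[c^2\Delta_\rr\source]$.

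The step I expect to need the most care is the bookkeeping at $t=0$: as a space--time distribution, $\partial_t^2 p$ equals $H(t)\,v + \delta'(t)\,\source$ and therefore does not literally satisfy \eqref{eq:wave2}, so the statement must be read as an identity on $\set{t>0}$ — equivalently, $\partial_t^2$ applied to the causal pressure signal and then re-extended by zero (which is exactly what one computes from data sampled on $[0,T]$). A secondary and more routine point is justifying the initial value reformulation and the termwise differentiation at the regularity actually available for $\source$ and $c$; this is covered by standard well-posedness theory for the wave equation, and the formal identities above pass to the limit.
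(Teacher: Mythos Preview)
Your proposal is correct and follows essentially the same route as the paper: both reduce \eqref{eq:wave} to the Cauchy problem with data $(\source,0)$, differentiate twice in time to obtain the Cauchy problem with data $(c^2\Delta_\rr\source,0)$, and then translate back to the distributional form \eqref{eq:wave2}. Your final paragraph on the $\delta'(t)\source$ term at $t=0$ is in fact more careful than the paper, which silently identifies $\partial_t^2 p$ with its restriction to $\{t>0\}$; your reading of the statement as an identity on $\{t>0\}$ (equivalently, after re-extension by zero) is exactly what is needed for the conclusion about $\fullop$.
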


\begin{proof}
	We first  recall that the solution of \eqref{eq:wave} for $t>0$  is
	equivalent to the initial value problem
	\begin{align}  \label{eq:wave-ini1}
		(\partial^2_t     -  c^2 \Delta) p(\rr,t)
		&=
		0 \,, \text{ for }
		\kl{\rr,t} \in
		\R^d \times \kl{0, \infty}
		\\ \label{eq:wave-ini2}
		p\kl{\rr,0}
		&=
		\source(\rr) \,,
		\quad \text{ for }
		\rr  \in \R^d
		\\ \label{eq:wave-ini3}
		\partial_t
		p\kl{\rr,0}
		&=0 \,,
		\quad \text{ for }
		\rr  \in \R^d \,.
	\end{align}
	To see this equivalence note first that the solution of
	\eqref{eq:wave-ini1}-\eqref{eq:wave-ini3}   extends to a smooth
	function $p\colon \R^d \times \R \to \R$ that is even in the time
	variable. Denoting by $\chi = \chi\set{t>0}$ the characteristic function
	function of $(0,\infty)$ yields
	$(\partial^2_t     -  c^2  \Delta_{\rr} ) (\chi p) = \chi (\partial^2_t -  c^2  \Delta_{\rr} ) ( p)
	+ \delta' \source  + 2 \delta (\partial_t  p) $
	Using the initial conditions \eqref{eq:wave-ini2}, \eqref{eq:wave-ini3}
	shows that $\chi p$ (which coincides with $p$ for positive times)
	solves \eqref{eq:wave}.

	According to the above considerations, for $t>0$, the function
	$p$ coincides with the solution of   \eqref{eq:wave-ini1}-\eqref{eq:wave-ini3}.
	As a consequence, for $t>0$, the function $q = \partial_t^2 p$ satisfies
	\begin{align}  \label{eq:wave2-ini1}
		(\partial^2_t     -  c^2 \Delta) q(\rr,t)
		&=
		0 \,, \text{ for }
		\kl{\rr,t} \in
		\R^d \times \kl{0, \infty}
		\\ \label{eq:wave2-ini2}
		q\kl{\rr,0}
		&=
		c^2(\rr) \Delta_{\rr} \source(\rr) \,,
		\quad \text{ for }
		\rr  \in \R^d
		\\ \label{eq:wave2-ini3}
		\partial_t
		q\kl{\rr,0}
		&=0 \,,
		\quad \text{ for }
		\rr  \in \R^d \,.
	\end{align}
	In fact, \eqref{eq:wave2-ini1} results by applying the second derivative  to
	\eqref{eq:wave-ini1} and \eqref{eq:wave2-ini3} follows from the symmetry of
	$p$. Evaluating \eqref{eq:wave-ini1} at $t=0$ and applying the Laplacian to
	\eqref{eq:wave-ini2} yields initial condition \eqref{eq:wave2-ini2}.  Finally as for the
	original pressure one concludes that  \eqref{eq:wave2-ini1}-\eqref{eq:wave2-ini3}
	implies \eqref{eq:wave2}.
\end{proof}

%\subsection{Two stage reconstruction approach}

\begin{remark} \label{rem:discrete}
	In order to focus on the main ideas, throughout
	the following we assume the spatial variable
	$\rr  \in \set{0, \dots, N_\rr}^d$ to be already discretized.
	The discrete Laplacian $\Delta_\rr$ then may be defined  via
	symmetric  finite differences; alternatively $\Delta_\rr$ may be defined
	via the Fourier transform in the spectral domain.
	Theorem \ref{thm:laplace} and the equivalence of \eqref{eq:wave} and
	\eqref{eq:wave-ini1}-\eqref{eq:wave-ini3}  then holds for discrete
	sources   $f \colon \set{0, \dots, N_\rr}^d \to \R$ as well.
\end{remark}

Typical  phantoms  consist of smoothly varying parts
and rapid changes  at interfaces. For such PA sources,
the modified source  $c^2 \Delta_{\rr} \source$
is sparse or at least compressible. The theory of
CS therefore predicts that the modified source
can be recovered by solving
\begin{equation} \label{eq:L1exact}
\min_{ \Lsource}     \norm{\Lsource}_1
\quad \text{such that }     \fullop \Lsource  =   \partial_t^2 \data       \,.
\end{equation}
In the case the unknown  is only  approximately sparse or the  data
are  noisy, one instead minimizes the penalized functional
problem $\frac{1}{2}  \norm{\fullop \Lsource  -   \partial_t^2 \data}_2^2
+\beta \norm{\Lsource}_1$, where $\beta >0$ is a regularization parameter
which gives trade-off between the data-fitting term $\norm{\fullop \Lsource  -   \partial_t^2 \data}_2^2$ and the regularization term  $\norm{\Lsource}_1$. Having obtained an approximation of $\Lsource$
by either solving \eqref{eq:L1exact} or  the relaxed version,  one can recover
the original PA source $\source $ by subsequently solving the Poisson
equation $   \Delta_{\rr} \source  = \Lsource/c^2$ with zero
boundary conditions.

While the  above two-stage procedure recovers boundaries well,
we  observed disturbing low frequency artifacts
in the reconstruction of $f$. Therefore, below we
introduce   a new  reconstruction approach based on Theorem~\ref{thm:laplace}
that jointly recovers $\source$ and $\Lsource$.

\subsection{Joint reconstruction approach}

As argued above, the second derivative $p''$
is  well suited (via $ c^2(\rr) \Delta_{\rr} \source$ ) to recover
the singularities of  $\source$, but hardly contained low-frequency
components of $\source$. On the other hand, the low frequency
information is contained in the original data, which is still available to us.
Therefore we propose the following joint constrained optimization problem
\begin{equation} \label{eq:joint2}
\begin{aligned}
&\min_{(\source, \Lsource)}  \norm{\Lsource}_1  +  I_{C} (\source)  \\
&\text{such that }
\begin{bmatrix}  \fullop\source,  \fullop\Lsource ,  \Delta_{\rr} \source  -    \Lsource/c^{2} \end{bmatrix} =
\begin{bmatrix} \data , \data'' ,0 \end{bmatrix}   \,.
\end{aligned}
\end{equation}
We habe the following result.

\begin{theorem}
	Assume that $\source \colon  \set{0, \dots, N_\rr}^d \to \R$  is
	non-negative and that  $\Delta_{\rr} \source$ is $s$-sparse.
	Moreover, suppose  that  the measurement matrix $\fullop$  satisfies the RIP
	(see \eqref{eq:RIP})  and denote $\data = \fullop\source$.
	Then, the pair $[\source, \Delta_{\rr} \source]$ can be recovered as the
	unique solution of~\eqref{eq:joint2}.
\end{theorem}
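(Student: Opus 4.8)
The plan is to reduce the statement to the classical RIP-based uniqueness theorem for $\ell^1$-minimisation, exploiting two facts: by Theorem~\ref{thm:laplace} the pair built from the true source is feasible for \eqref{eq:joint2}, and although the coupling constraint $\Delta_\rr\source-\Lsource/c^{2}=0$ restricts the feasible set, it only \emph{shrinks} the set of admissible second components $\Lsource$; hence the sparse component is still pinned down by the usual argument applied to the larger, uncoupled constraint set $\set{\Lsource\mid\fullop\Lsource=\data''}$.

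First I would record feasibility. Set $\Lsource^\star\triangleq c^{2}\Delta_\rr\source$; since $c>0$ pointwise, $\Lsource^\star$ has the same support as $\Delta_\rr\source$ and is therefore $s$-sparse. By Theorem~\ref{thm:laplace}, $\data''=\partial_t^2\fullop[\source]=\fullop[c^{2}\Delta_\rr\source]=\fullop\Lsource^\star$, while $\fullop\source=\data$ and $\Delta_\rr\source-\Lsource^\star/c^{2}=0$ hold trivially, and non-negativity of $\source$ gives $I_C(\source)=0$. So $(\source,\Lsource^\star)$ is feasible for \eqref{eq:joint2} with finite objective value $\norm{\Lsource^\star}_1$. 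Moreover any minimiser has finite objective, hence $\source\in C$, so on the relevant part of the feasible set the objective is simply $\norm{\Lsource}_1$.

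Now let $(\tilde\source,\tilde\Lsource)$ be any solution of \eqref{eq:joint2}. The coupling constraint gives $\tilde\Lsource=c^{2}\Delta_\rr\tilde\source$ and hence $\fullop\tilde\Lsource=\data''=\fullop\Lsource^\star$, so $\tilde\Lsource$ belongs to the affine set $\set{\Lsource\mid\fullop\Lsource=\data''}$, over which $\Lsource^\star$ is the \emph{unique} $\ell^1$-minimiser --- this is the standard RIP recovery guarantee, using that $\Lsource^\star$ is $s$-sparse and that $\fullop$ satisfies the RIP (of order $2s$, with constant $<1/\sqrt2$), cf.\ \eqref{eq:RIP}. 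Since $\Lsource^\star$ is itself attainable inside the smaller feasible set of \eqref{eq:joint2} (by the previous paragraph), minimality of $\tilde\Lsource$ forces $\norm{\tilde\Lsource}_1=\norm{\Lsource^\star}_1$, and uniqueness over the larger set then gives $\tilde\Lsource=\Lsource^\star$. Finally, from the coupling constraint $\Delta_\rr\tilde\source=\tilde\Lsource/c^{2}=\Delta_\rr\source$, so $\Delta_\rr(\tilde\source-\source)=0$; with the zero (Dirichlet) boundary conditions used throughout the paper the discrete Laplacian on $\set{0,\dots,N_\rr}^d$ is invertible, whence $\tilde\source=\source$ and \eqref{eq:joint2} has the announced unique solution $[\source,\Delta_\rr\source]$. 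Should $\Delta_\rr$ instead be chosen with non-trivial kernel (e.g.\ the spectral Laplacian, with the constants in its kernel), the remaining ambiguity in $\tilde\source-\source$ is removed by the still-unused constraints $\fullop(\tilde\source-\source)=0$ together with $\tilde\source,\source\ge0$ --- which is precisely the role of the indicator $I_C$ in \eqref{eq:joint2}.

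I expect the main obstacle to be conceptual rather than computational: one must resist applying the RIP directly to $\tilde\source-\source$ (which carries no sparsity) and instead observe that the coupling constraint only deletes candidates for $\Lsource$, so the uncoupled CS-uniqueness statement transfers verbatim. A secondary point needing care is to fix which discrete Laplacian --- and hence which boundary condition, in the sense of Remark~\ref{rem:discrete} --- is in force at the last step, since that determines whether non-negativity is actually needed to pass from $\Delta_\rr\source$ back to $\source$.
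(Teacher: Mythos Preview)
Your proposal is correct and follows essentially the same route as the paper: feasibility of the true pair via Theorem~\ref{thm:laplace}, reduction of the $\Lsource$-component to the uncoupled $\ell^1$-problem~\eqref{eq:L1exact}, RIP-uniqueness there, and then recovery of $\source$ from the coupling constraint. Your write-up is in fact more careful than the paper's on two points---you spell out \emph{why} a minimiser's second component must solve~\eqref{eq:L1exact} (the paper asserts this without the ``smaller feasible set inside larger affine set'' comparison), and you flag the need for invertibility of the discrete Laplacian (or an auxiliary argument) in the final step, which the paper passes over in one line.
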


\begin{proof}
	According to Theorem~\ref{thm:laplace} (in a discrete form; compare Remark~\ref{rem:discrete}),
	$p''(\rr, t)$  is the unique causal
	solution of the wave equation \eqref{eq:wave2} with modified source term
	$\Lsource = c^2(\rr) \Delta_{\rr} \source$. As a consequence $c^2(\rr) \Delta_{\rr} \Lsource$ satisfies
	$\fullop\Lsource = \data'' $, which implies that the pair  $[\source, \Lsource]$
	is a feasible for~\eqref{eq:joint2}. It remains to verify that $[\source, \Lsource]$ is the only
	solution of~\eqref{eq:joint2}.  To show  this, note  that for  any solution
	$[\source^*, \Lsource^*]$ of~\eqref{eq:joint2} its second component $\Lsource^*$
	is a solution of \eqref{eq:L1exact}.
	Because $\fullop$ satisfies the $s$-RIP, and
	$\Lsource = c^2(\rr) \Delta_{\rr} \source$ is  $s$-sparse, CS
	theory implies that \eqref{eq:L1exact} is uniquely solvable \cite{CanRomTao06a,Don06,foucart2013mathematical} and therefore
	$\Lsource = \Lsource^*$. The last constraint then  implies that
	$ \source^* = \source$.
\end{proof}

In the case the data only approximately sparse or noisy, we propose, instead of
\eqref{eq:joint2}, to solve the    $\ell^2$-relaxed version
%\begin{multline} \label{eq:joint-pen}
%\frac{1}{2} \norm{\fullop \source- \data}_2^2
%+
%\frac{1}{2} \norm{\fullop\Lsource - \data''}_2^2
%+
%\frac{\alpha}{2} \norm{\Delta_{\rr} \source  -    \Lsource/c^{2}}_2^2
%\\ +
% \beta \norm{\Lsource}_1  +  I_{C} (\source)
%\to \min_{(\source, \Lsource)} \,.
%\end{multline}
\begin{equation} \label{eq:joint-pen}
\frac{1}{2} \norm{\fullop \source- \data}_2^2
+
\frac{1}{2} \norm{\fullop\Lsource - \data''}_2^2
+
\frac{\alpha}{2} \norm{\Delta_{\rr} \source  -    \Lsource/c^{2}}_2^2
+
\beta \norm{\Lsource}_1  +  I_{C} (\source)
\to \min_{(\source, \Lsource)} \,.
\end{equation}
Here $\alpha>0$ is a tuning and  $\beta >0$ a regularization
parameter. There are several modern methods to efficiently solve
\eqref{eq:joint-pen}. In this work we use the forward-backward splitting
with quadratic term as smooth part used in  the explicit (forward) step and
$ \beta \norm{\Lsource}_1  +  I_{C} (\source) $ as non-smooth part used for
the implicit (backward) step.

\subsection{Numerical minimization}

We will solve~\eqref{eq:joint-pen} using a proximal gradient
algorithm~\cite{combettes2011proximal}, which is an  algorithm
well suited for minimizing  the sum  of a smooth and a non-smooth but convex part.
In the case of  \eqref{eq:joint-pen}  we take the smooth part as
%\begin{multline}
%\Phi(\source,\Lsource) \triangleq  \frac{1}{2} \norm{\fullop \source- \data}_2^2
% +
%\frac{1}{2} \norm{\fullop\Lsource - \data''}_2^2
%\\ +
%\frac{\alpha}{2} \norm{\Delta_{\rr} \source  -    \Lsource/c^{2}}_2^2
%\end{multline}
\begin{equation}
\Phi(\source,\Lsource) \triangleq  \frac{1}{2} \norm{\fullop \source- \data}_2^2
+
\frac{1}{2} \norm{\fullop\Lsource - \data''}_2^2
+
\frac{\alpha}{2} \norm{\Delta_{\rr} \source  -    \Lsource/c^{2}}_2^2
\end{equation}
and the  non-smooth part as $\Psi(f,h) \triangleq \beta \norm{\Lsource}_1  +  I_{C}(f)$.

The proximal gradient  algorithm then alternately  performs an explicit gradient step
for $\Phi$ and an implicit proximal step for $\Psi$.
The gradient $[\nabla_\source \Phi, \nabla_\Lsource \Phi]$ of the  smooth part can
easily be computed  to be
\begin{align*}
	\nabla_f \Phi (f,h)
	&=  \fullop^* (\fullop \source- \data)-  \alpha \Delta_{\rr} (\Delta_{\rr} \source  -    \Lsource/c^{2})\\
	\nabla_h \Phi (f,h)
	&=   \fullop^* (\fullop \Lsource- \data'')- \frac{\alpha }{c^2}(\Delta_{\rr} \source  -    \Lsource/c^{2}) \,.
\end{align*}
The proximal map of the non-smooth part is given by
\begin{align*}
	\prox(\source,\Lsource) &= [\prox_{I_C}(\source), \prox_{\beta\|\cdot\|_1(\Lsource)}] \,,\\
	\prox_{I_C}(\source) &= (\max(\source_i,0))_i \,,\\
	\prox_{\beta\|\cdot\|_1}(h)  &= (\max(|\Lsource_i|-\beta,0)\,\sign(\Lsource_i))_i
\end{align*}
With this, the proximal gradient algorithm is given by
\begin{align} \label{eq:prox1}
	\source^{k+1} &= \prox_{I_C}\left(\source^k - \mu_k  \nabla_f \Phi (\source^k, \Lsource^k)  \right)
	\\ \label{eq:prox2}
	\Lsource^{k+1} &= \prox_{\beta\|\cdot\|_1}\left(\Lsource^k - \mu_k  \nabla_k \Phi (\source^k, \Lsource^k)\right),
\end{align}
where $(\source^k, \Lsource^k)$ is the $k$-th iterate and $\mu_k$ the step size in the $k$-th
iteration. We initialize the proximal gradient algorithm with $\source^0=\Lsource^0=0$.

\begin{remark}
	Note that the optimization problem~\eqref{eq:joint2} is further
	equivalent to the analysis-$\ell_1$ problem
	\begin{equation} \label{eq:jointAnalysis}
	\begin{aligned}
	&\min_{\source}  \norm{c\Delta_{\rr} \source}_1  +  I_{C} (\source)  \\
	&\textnormal{such that }
	\fullop\source=
	\data\,.
	\end{aligned}
	\end{equation}
	Implementation of \eqref{eq:jointAnalysis} avoids taking the second time derivative
	of the data $y$.  Because the proximal map of $\source \mapsto \norm{c\Delta_{\rr} \source}_1$
	in not available explicitly, \eqref{eq:jointAnalysis}  and its  relaxed versions
	cannot be  straightforwardly  addressed with the  proximal gradient algorithm.
	Therefore, in the present paper we only use the model \eqref{eq:joint-pen}
	and the  algorithm \eqref{eq:prox1}, \eqref{eq:prox2} for its minimization.
	Different  models and algorithms will be  investigated  in future research.
\end{remark}

\section{Experimental and numerical results}
\label{sec:results}

\subsection{Numerical results}

For the presented numerical results, the two dimensional PA source term
$\source \colon \set{0, \dots, N_{\rr}}^2 \to \R$ depicted in Figure~\ref{fig:pex}
is used which is assumed to be  supported in a disc of radius $R$.
Additional results are presented using an MRI image.
The synthetic data is recorded on the  boundary circle of radius $R$, where the the time was discretized
with  $301$ equidistant sampling points in the interval $[0,2R]$. The full data was recorded
at $n=200$ detector locations. The reconstruction of both phantoms via the filtered backprojection
algorithm of \cite{FPR} from the  full measurements is shown in Figure~\ref{fig:fbpRecon}.

\begin{psfrags}
	\begin{figure}[htb!]
		\begin{center}
			\includegraphics[width=0.8\columnwidth]{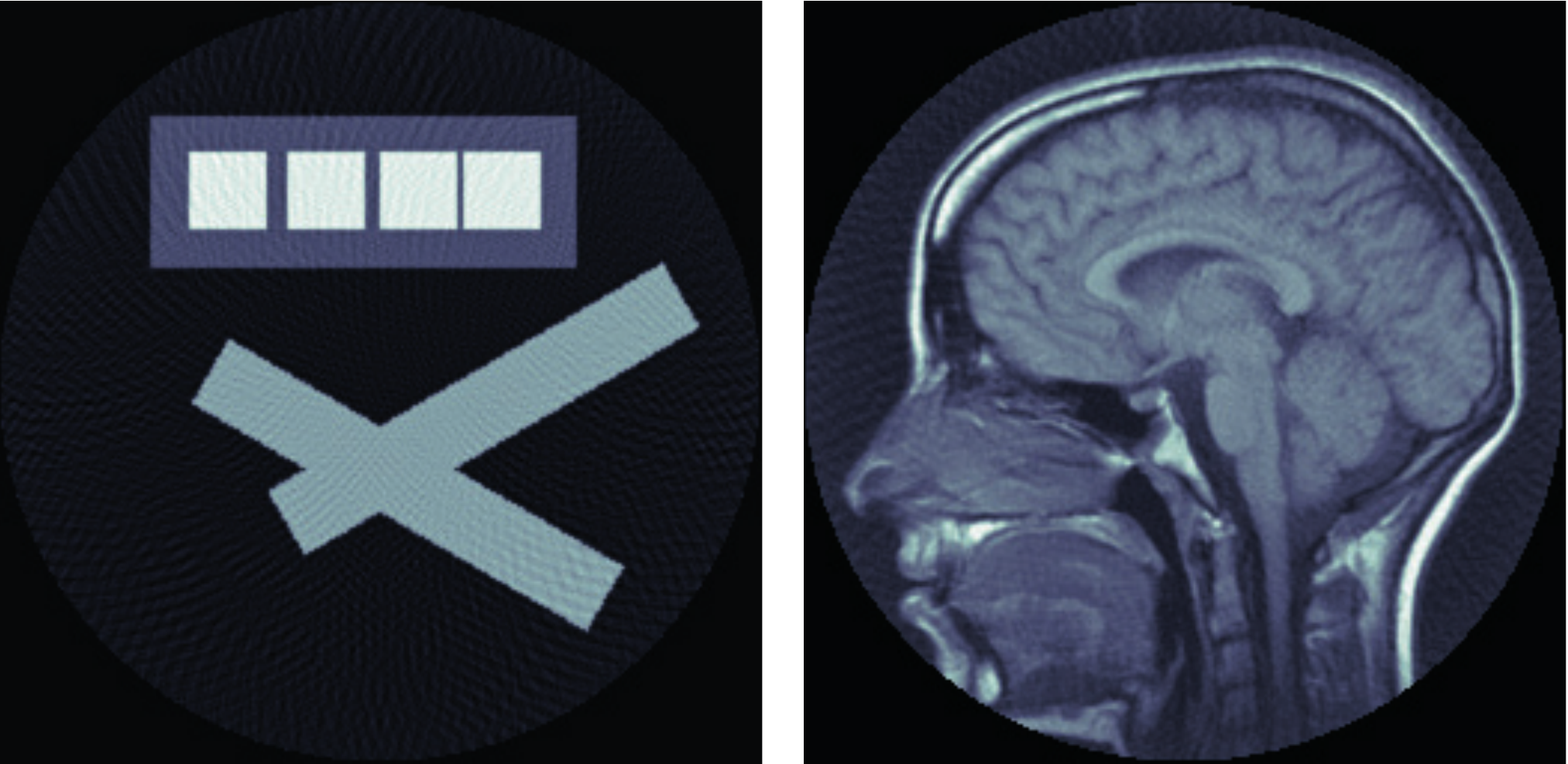}
			\caption{\textbf{Reconstructions from full data (cross phantom and MRI phantom):} \label{fig:fbpRecon} Reconstruction from image from 200
				equispaced (fully sampled) measurements of the  cross and the MRI image
				used  in the numerical experiments.}
		\end{center}
	\end{figure}
\end{psfrags}

CS measurements  $\fullop  \source$ (see \eqref{eq:cs}) have been generated in
two random ways and one  deterministic way. The random matrices $\Am$ have be taken either
as  random Bernoulli matrix with entries $\pm 1 / {\sqrt{m}}$ with equal probability or a  Gaussian random matrix consisting of i.i.d. $\mathcal{N}(0,1/{m})$-Gaussian random variables in each entry. The deterministic subsampling was performed by choosing $m$ equispaced detectors.
In the joint reconstruction with \eqref{eq:prox1}, \eqref{eq:prox2}, the step-size and regularization parameters  are chosen to be $\mu_k = 0.1$, $\alpha =  0.1$ and $\beta = 0.005$; $5000$ iterations  are performed.
For the random  subsampling matrices the recovery guarantees from the theory of CS can be employed, but they do not provably hold for the deterministic subsampling - although the results are equally convincing even for subsampling on the order of \SI{10}{\percent} of the full data, cf. Figure~\ref{fig:reconNoisefree20}.  All results are compared to the standard filtered backprojection (FBP)
reconstruction  applied  to $\Am^T ( \fullop f)$.

\begin{psfrags}
	\begin{figure}[htb!]
		\begin{center}
			\includegraphics[width=1\columnwidth]{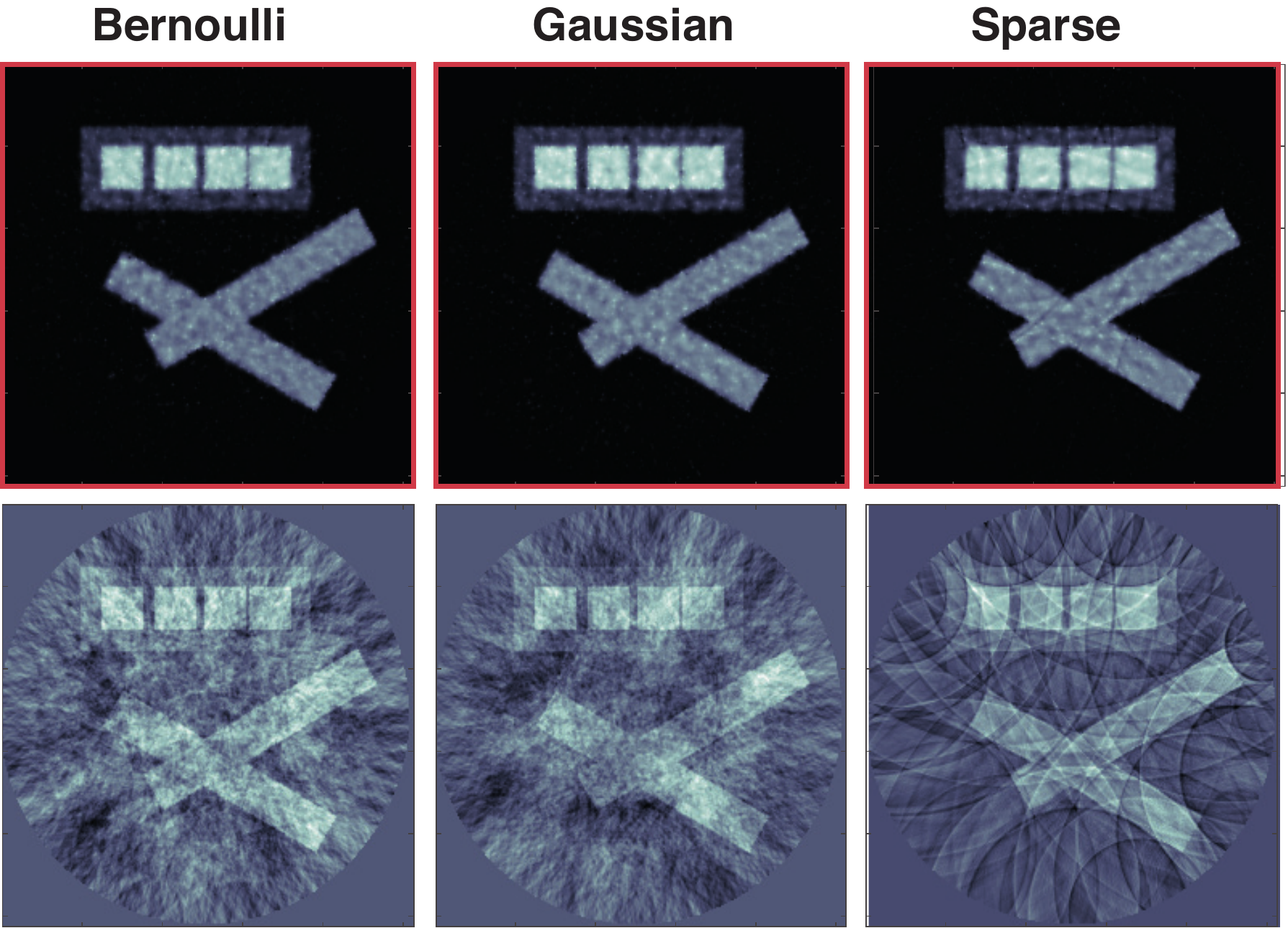}
			\caption{\textbf{Reconstructions from 20 noisefree  measurements (cross phantom):} \label{fig:reconNoisefree20} Reconstruction from $m = 20$
				noisefree CS  measurements (i.e. \SI{10}{\percent} of the fully sampled  data)  using the method
				presented in this article (top) and  FBP (bottom).}
		\end{center}
	\end{figure}
\end{psfrags}

If one increases the number of measurements to about \SI{25}{\percent}  of the data, the reconstruction results become almost indistinguishable from the results obtained from FBP on the full data, cf. Figure~\ref{fig:reconNoisefree50}.

\begin{psfrags}
	\begin{figure}[htb!]
		\begin{center}
			\includegraphics[width=1\columnwidth]{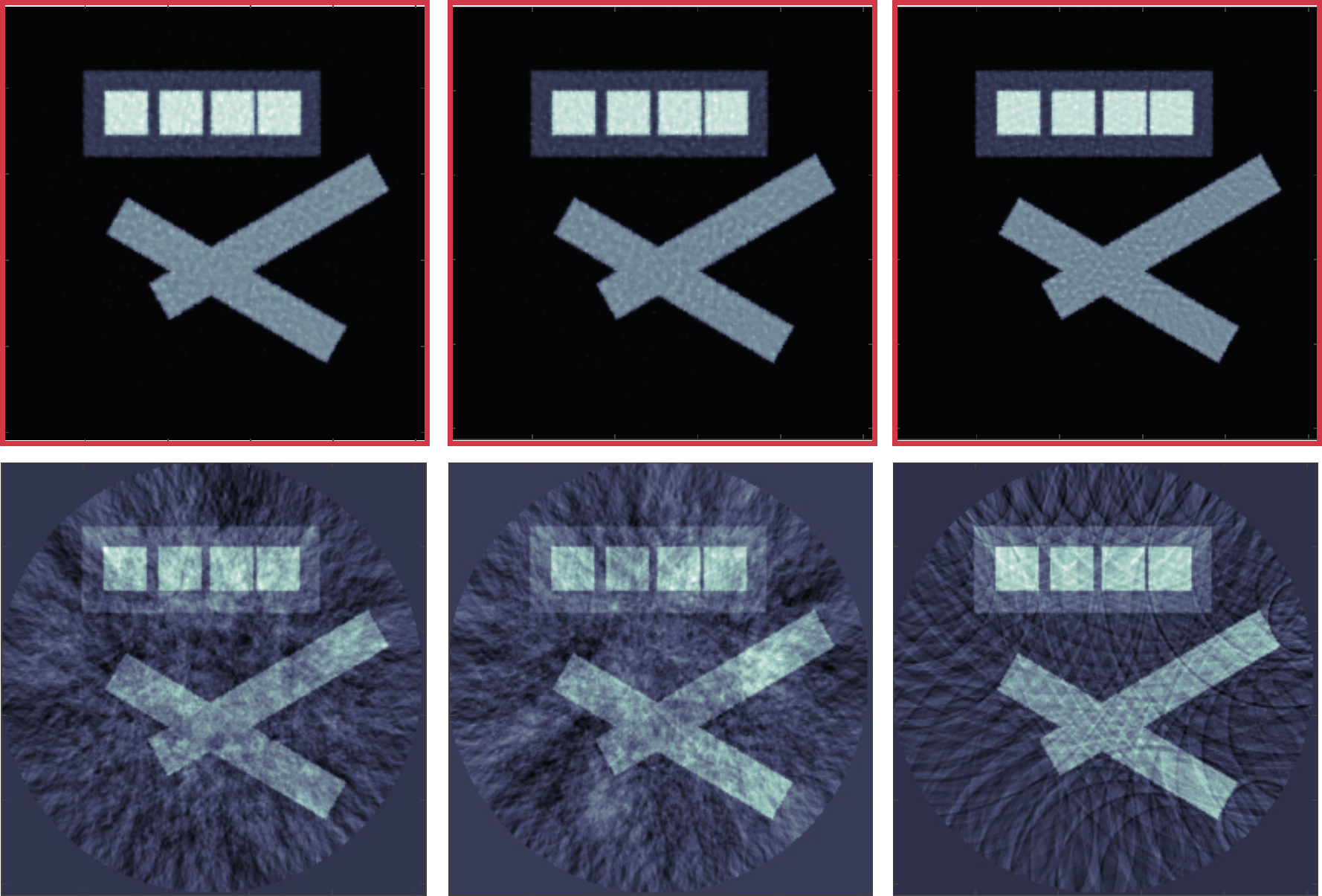}
			\caption{\textbf{Reconstructions from 50 noise-free measurements (cross phantom):}\label{fig:reconNoisefree50} Reconstruction from $m=50$ (i.e. \SI{25}{\percent} of the fully sampled  data) noise-free  CS  measurements using the method presented in this article (top) and  FBP (bottom).}
		\end{center}
	\end{figure}
\end{psfrags}

In Figure~\ref{fig:reconMRI60} the application of the method developed in this article to an MRI
image is presented. As the sparsity of the Laplacian is not as pronounced as in the synthetic example, the MRI image requires more measurements to achieve high qualitative outcome.

\begin{psfrags}
	\begin{figure}[htb!]
		\begin{center}
			\includegraphics[width=0.8\columnwidth]{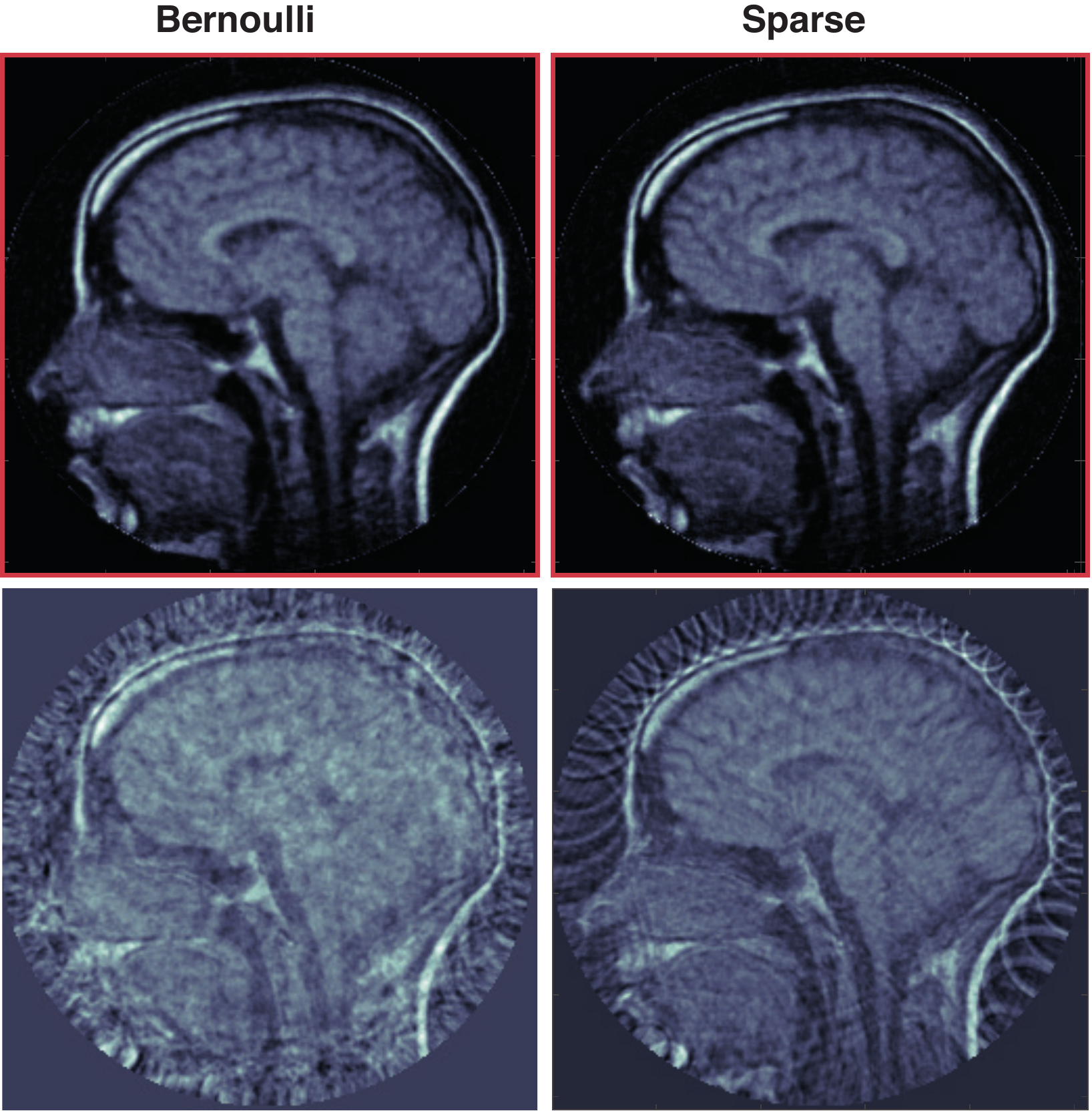}
			\caption{\textbf{Reconstructions from 60 noisefree measurements (MRI phantom):}\label{fig:reconMRI60} Reconstruction of an MRI image from $m=60$ (i.e. \SI{33}{\percent} of the fully sampled  data) synthetically generated noisefree measurements using the method presented in this article (top) and  FBP (bottom).}
		\end{center}
	\end{figure}
\end{psfrags}

For noisy data, the algorithm still produces good results, although more samples need to be taken
to achieve good results. For the synthetic phantom, Gaussian noise amounting to
an SNR of approximately \SI{15}{\percent} was added. The reconstruction results using $m=20$ and $m=50$  measurements  are  depicted in Figure~\ref{fig:reconNoisy20} and Figure~\ref{fig:reconNoisy50}, respectively.

\begin{psfrags}
	\begin{figure}[htb!]
		\begin{center}
			\includegraphics[width=1\columnwidth]{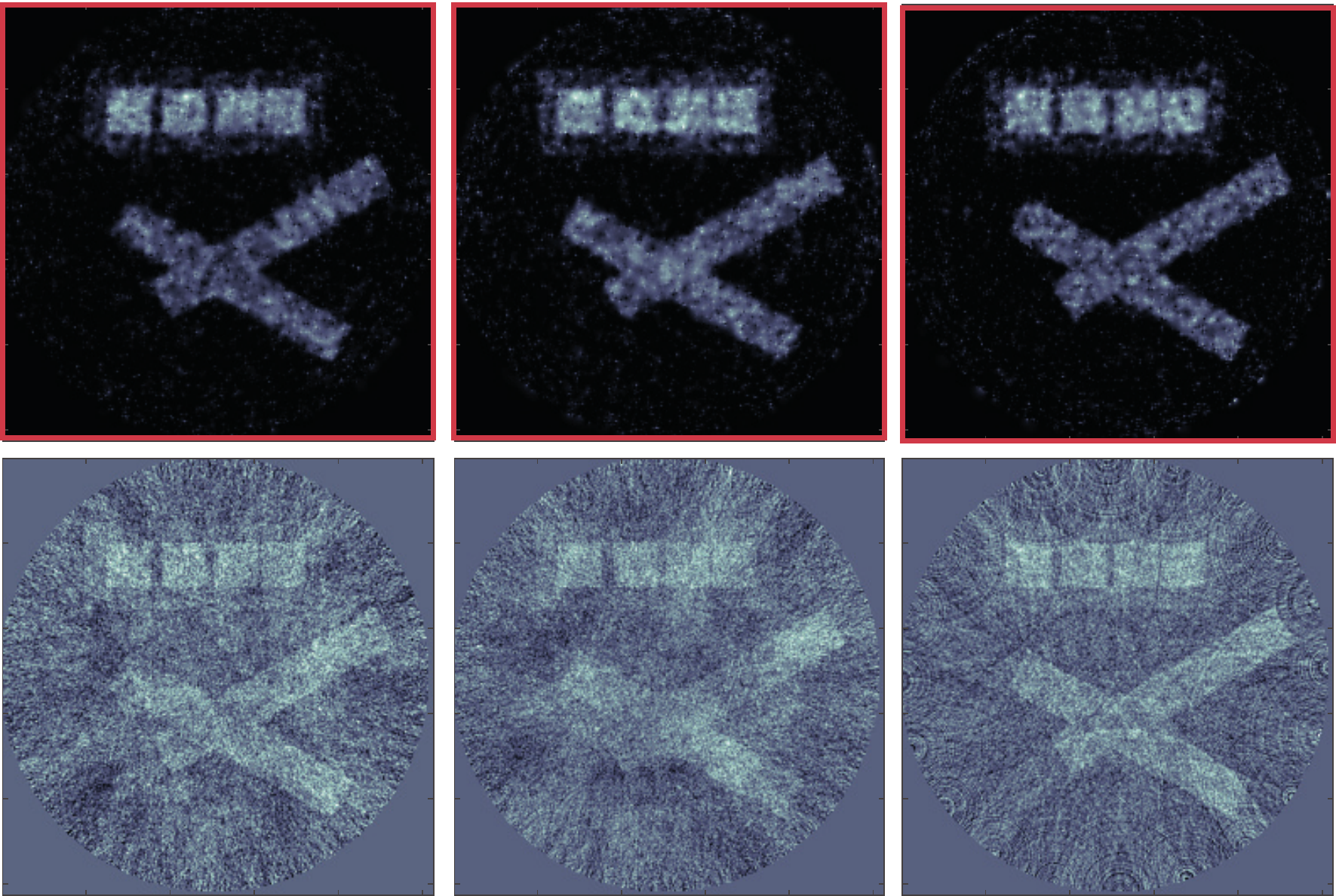}
			\caption{\textbf{Reconstructions from 20 noisy measurements (cross phantom):} \label{fig:reconNoisy20} Reconstruction from $m=20$
				(i.e. \SI{10}{\percent} of the fully sampled data) noisy measurements using the method presented in this article (top) and FBP (bottom).}
		\end{center}
	\end{figure}
\end{psfrags}

\begin{psfrags}
	\begin{figure}[htb!]
		\begin{center}
			\includegraphics[width=1\columnwidth]{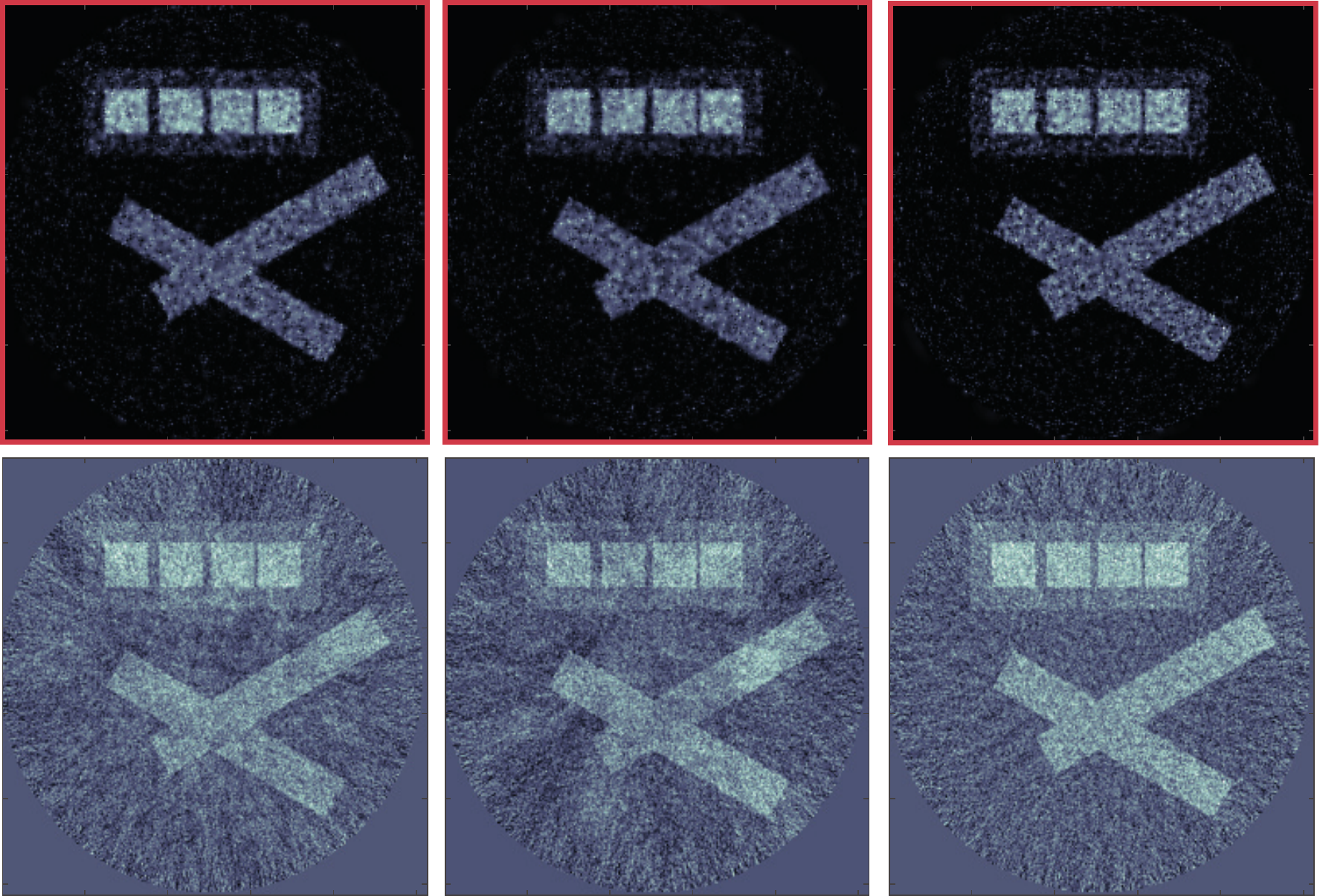}
			\caption{\label{fig:reconNoisy50} \textbf{Reconstructions from 50 noisy   measurements (cross phantom):} Reconstruction from $m=50$ (i.e. \SI{25}{\percent}
				of the fully sampled data) noisy measurements using the method presented in this article (top) and FBP (bottom).}
		\end{center}
	\end{figure}
\end{psfrags}

\subsection{Experimental results}

Experimental data have been acquired by an all-optical photoacoustic projection imaging (O-PAPI) system as described in \cite{Bauer-Marschallinger:17}. The system featured 64 integrating line detector (ILD) elements distributed along a circular arc of radius \SI{4}{cm}, covering an angle of \SI{289}{degree}. For an imaging depth of \SI{20}{mm}, the imaging resolution of the O-PAPI system was estimated to be between
\SI{100}{\mu m} and \SI{260}{\mu m}; see~\cite{Bauer-Marschallinger:17}. PA signals were excited by illuminating the sample from two sides with pulses from a frequency-doubled Nd:YAG laser (Continuum Surelite, \SI{20}{Hz} repetition rate, \SI{6}{ns} pulse duration, \SI{532}{nm} center wavelength) at a fluence of \SI{21}{mJ/cm^2} and recorded by the ILD elements with a sample rate of
\SI{60}{MS/s}.
The sample consisted an approximately triangular shaped piece of ink-stained leaf skeleton, embedded in a cylinder consisting of agarose gel with a diameter of \SI{36}{mm} and a height of \SI{40}{mm}. Intralipid was added to the agarose to increase optical scattering. The strongest branches of the leaf had diameters of approximately \SIrange{160}{190}{\mu m} and the smallest branches of about \SI{50}{\mu m}. Results are only for 2D (projection imaging).

\begin{psfrags}
	\begin{figure}[htb!]
		\begin{center}
			\includegraphics[width=\columnwidth]{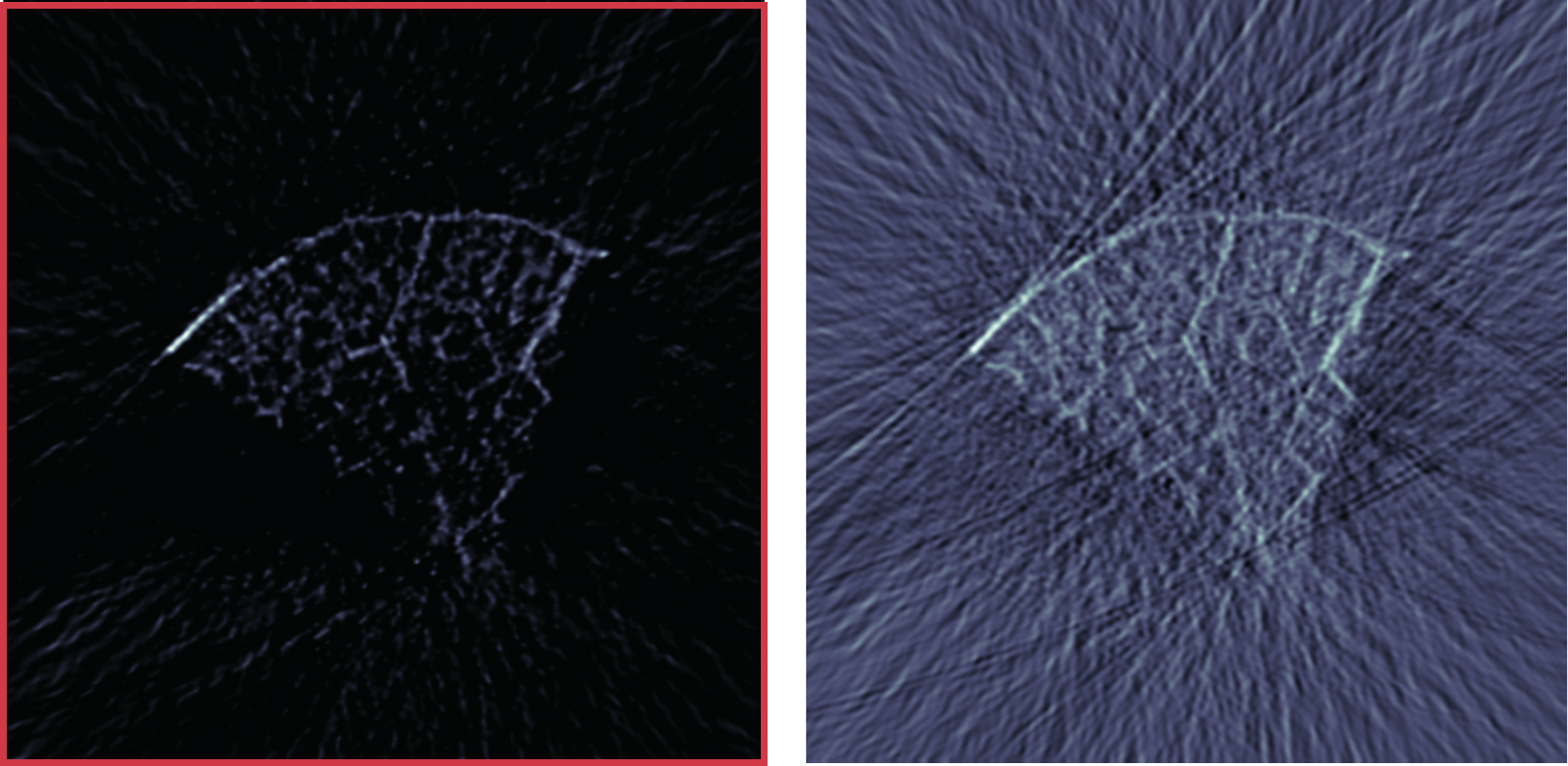}
			\caption{\textbf{Reconstruction from experimental data using
					60 sparse samples:}\label{fig:leaf60}
				Left: PAT image reconstructed with the proposed method.
				Right: FBP reconstruction.}
		\end{center}
	\end{figure}
\end{psfrags}

\begin{psfrags}
	\begin{figure}[htb!]
		\begin{center}
			\includegraphics[width=\columnwidth]{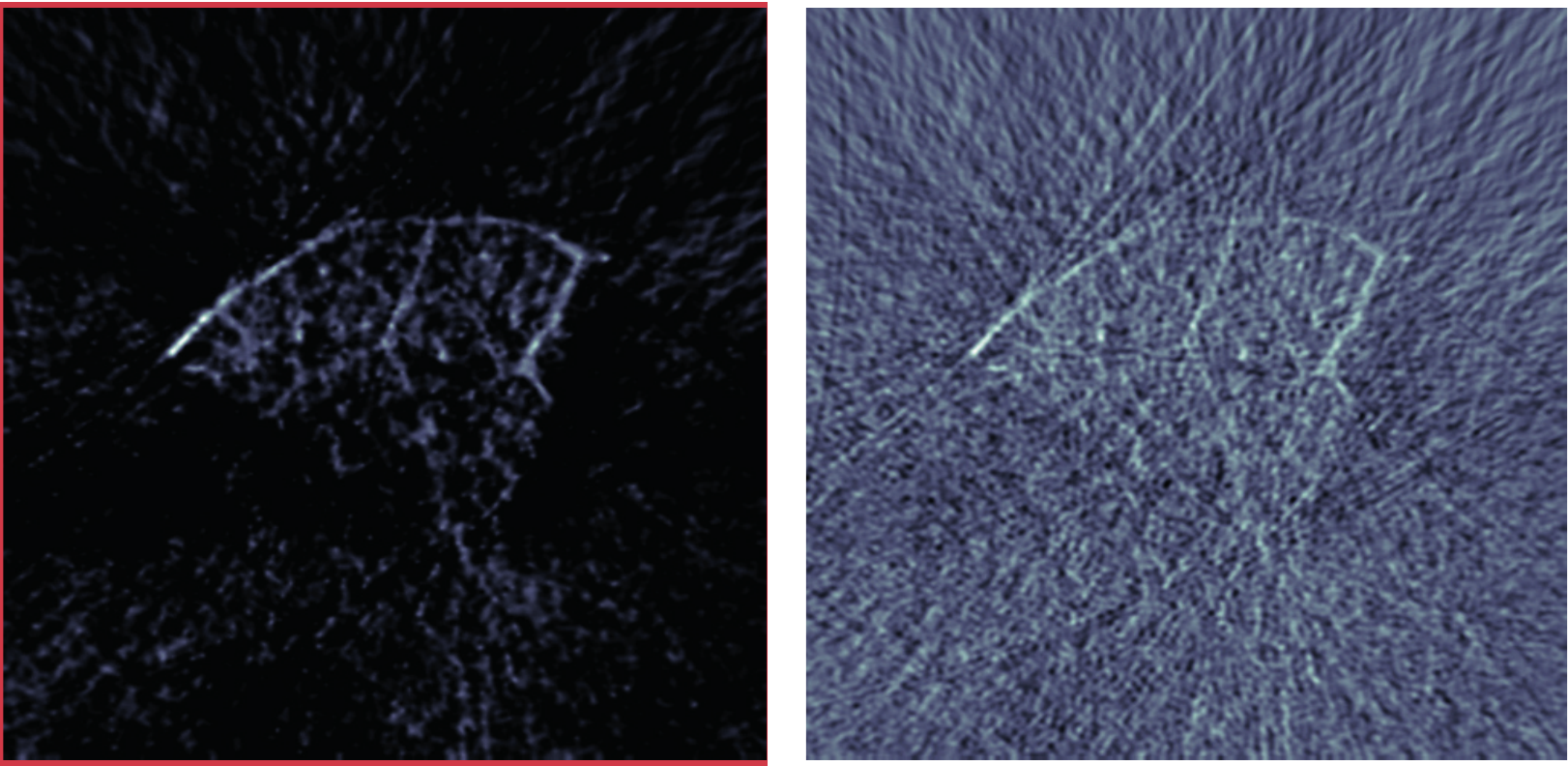}
			\caption{\textbf{Reconstruction using  30 Bernoulli measurements:}\label{fig:leaf30}
				Left: PAT image reconstructed with the proposed method.
				Right: FBP reconstruction.}
		\end{center}
	\end{figure}
\end{psfrags}

Reconstruction results for the leaf phantom from 60 sparsely sampled sensor locations after 500 iterations with the proposed joint minimization algorithm are shown in
Figure~\ref{fig:leaf60}.
For this, the regularization and step-size parameters were chosen as in the previous section. From the experimental data we also generated $m=30$
random Bernoulli measurements. The reconstruction results using this data are
shown in Figure~\ref{fig:leaf30}. For all results, the  PA  source
is displayed on  a \SI{1.6}{cm} $\times$  \SI{1.33}{cm} rectangle with step size
\SI{26}{\mu m} inside  the detection arc.

\section{Conclusion}
\label{sec:conclusion}

In order to achieve high spatial resolution in PAT, standard measurement
and  reconstruction  schemes require a large number of
spatial measurements with high bandwidth detectors. In order to speed up the
measurement process,  systems allowing a large number of parallel measurements are desirable. However  such systems are technically demanding and costly to fabricate.
For example, in PAT with  integrating detectors, the required analog to digital converters
are among the most   costly building blocks. In order to increase measurement speed and to
minimize   system costs, CS aims to reduce the  number of measurements
while preserving  high resolution of the reconstructed image.

One main ingredient enabling CS in PAT is  sparsity of the image to be reconstructed.
To bring sparsity into play, in this paper we introduced a new  approach based on the
commutation  relation $\partial_t^2 \wave  [\source] = \wave  [c^2 \Delta_{\rr} \source]$
between the PAT forward operator $\wave$ and the the Laplacian. We developed a
new reconstruction strategy for jointly reconstructing the pair $[\source,  \Delta_{\rr} \source]$
by minimizing  \eqref{eq:joint-pen} and thereby using sparsity of  $\Delta_{\rr} \source$.
The commutation relation further allows to rigorously study  generalized
Tikhonov regularization of the form $\frac{1}{2} \norm { \fullop\source - \data } _2^2  +
\beta \norm{c\Delta_{\rr} \source}_1  +  I_{C} (\source)$ for CS PAT. Such an analysis
as well as the  development of more efficient numerical minimization schemes are
subjects of  further research.

\section*{Acknowledgments}
	Linh Nguyen's research is supported by the NSF grants DMS 1212125 and DMS 1616904. Markus  Haltmeier and Thomas Berer acknowledge support of the Austrian Science Fund (FWF), project P 30747.
Michael Sandbichler was supported by the Austrian Science Fund (FWF) under Grant no. Y760. Peter Burgholzer and Johannes Bauer-Marschallinger were supported by the strategic economic- and research program ``Innovative Upper Austria 2020'' of the province of Upper Austria. In addition, 
 the computational results presented have been achieved (in part) using the HPC infrastructure LEO of the University of Innsbruck.

% \section*{bibliography}
%\bibliographystyle{plain}
%\bibliography{CSPAT}

\end{document}